\documentclass[a4paper,11pt]{amsart}

\usepackage{style}
\usepackage{geometry}

\title[Symplectic rigidity of O'Grady's tenfolds]{Symplectic rigidity of O'Grady's tenfolds}

\author{Luca Giovenzana}
\address[Luca Giovenzana]{Department of Mathematical Sciences, Loughborough University,
Loughborough,
Leicestershire,
LE11 3TU, United Kingdom}
\curraddr{Department of Pure Mathematics\\ University of Sheffield\\ Hicks Building, Hounsfield Road\\ Sheffield, S3 7RH \\ United Kingdom}
\email{l.giovenzana@sheffield.ac.uk}

\author{Annalisa Grossi}
\address[Annalisa Grossi]{Fakultät für Mathematik, TU Chemnitz, Reichenhainer Str. 39, 09126 Chemnitz, Germany}
\curraddr{Laboratoire de mathématiques d’Orsay, Université Paris Saclay, Rue Michel Magat, Bât. 307, 91405 Orsay, France}
\email{annalisa.grossi@universite.paris.saclay.fr}

\author{Claudio Onorati}
\address[Claudio Onorati]{Dipartimento di Matematica, Universit\`a di Roma Tor Vergata, via della Ricerca Scientifica 1, 00133 Roma, Italy}
\curraddr{Dipartimento di Matematica, Università degli Studi di Bologna, Piazza di Porta San Donato 5,
40126 Bologna, Italy}
\email{claudio.onorati@unibo.it}

\author{Davide Cesare Veniani}
\address[Davide Cesare Veniani]{Institut für Diskrete Strukturen und Symbolisches Rechnen, Universität Stuttgart, Pfaffenwaldring 57, 70569 Stuttgart, Germany}
\email{davide.veniani@mathematik.uni-stuttgart.de}

\date{\today}

\makeatletter
\@namedef{subjclassname@2020}{%
 \textup{2020} Mathematics Subject Classification}
\makeatother

\subjclass[2020]{%
14J42 
(%
14J50
)}

\keywords{Irreducible holomorphic symplectic manifold, symplectic automorphism}

\thanks{Luca Giovenzana was supported by Engineering and Physical Sciences
Research Council (EPSRC) New Investigator Award EP/V005545/1 "Mirror
Symmetry for Fibrations and Degenerations". Annalisa Grossi was partially supported by the DFG through the research grant Le 3093/3-2, and partially supported by the European Research Council (ERC) under the European Union’s Horizon 2020 research and innovation programme (ERC-2020-SyG-854361-HyperK). Claudio Onorati gratefully acknowledges support from the PRIN grant with CUP E84I19000500006 and the ERC grant (2017) CoG771507-StabCondEn. Luca Giovenzana, Annalisa Grossi and Claudio Onorati are members of the INdAM group GNSAGA}

\usepackage{framed}

\begin{document}

\begin{abstract}	
We prove that any symplectic automorphism of finite order of an irreducible holomorphic symplectic manifold of O'Grady's \(10\)-dimensional deformation type is trivial.
\end{abstract}

\maketitle

\section{Introduction}
 
O'Grady~\cite{OGrady:desingularized.moduli.spaces} constructs an irreducible holomorphic symplectic manifold \(\widetilde M\) of dimension~\(10\) and second Betti number~\(24\) as a crepant resolution of a certain moduli space~\(M\) of coherent sheaves on a K3 surface. 
Any complex manifold~\(X\) which is deformation equivalent to \(\widetilde M\) is said to be \emph{of type~\(\OG10\)}.

In this short note, we prove the following theorem.

\begin{theorem} \label{thm:main.aut.symp.OG10}
If \(X\) is a complex manifold of type~\(\OG10\), and \(f \in \Aut(X)\) is a symplectic automorphism of finite order, then \(f\) is the identity.
\end{theorem}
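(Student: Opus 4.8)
The plan is to translate the statement into lattice theory via the global Torelli theorem and then to eliminate, one by one, every possibility for the action of $f$ on cohomology. Write $L \coloneqq U^{\oplus 3}\oplus E_8(-1)^{\oplus 2}\oplus A_2(-1)$ for the abstract lattice underlying $H^2(X,\mathbb Z)$ when $X$ is of type $\OG10$, so that $L$ has signature $(3,21)$ and discriminant group $\mathbb Z/3\mathbb Z$. It suffices to treat the case in which $f$ has prime order $p$: if $f$ is a nontrivial symplectic automorphism of order $n$, then for any prime $p\mid n$ the power $f^{n/p}$ is again symplectic and has order exactly $p$. Since $f$ is symplectic it fixes $H^{2,0}(X)$, hence, being semisimple of finite order, it acts trivially on the transcendental lattice $T(X)$; averaging a Kähler class over $\langle f\rangle$ shows that the invariant sublattice $L^{f}$ contains a positive-definite $3$-space, so the coinvariant lattice $S_f \coloneqq (L^{f})^{\perp}\subseteq L$ is negative definite and is contained in $\operatorname{NS}(X)$. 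By Markman's Torelli theorem the image of $\Aut(X)$ in $O(L)$ consists of the Hodge isometries that preserve the Kähler cone and lie in the monodromy group $\operatorname{Mon}^2(X)$, which for type $\OG10$ is known explicitly; thus $f$ is encoded by the pair $(S_f,\rho)$, where $\rho$ is the induced action of $\mathbb Z/p\mathbb Z$ on $S_f$, an action with no nonzero fixed vectors.

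Next I would assemble the lattice-theoretic constraints. Because $\rho$ has prime order $p$ and no fixed vectors, every eigenvalue of $\rho$ is a primitive $p$-th root of unity, so $p-1$ divides $\operatorname{rk}S_f$; since $\operatorname{rk}S_f\le 21$ this already restricts $p$ to $\{2,3,5,7,11,13,17,19\}$ (the value $p=23$ would force $\operatorname{rk}S_f=22$, which is impossible). Standard facts about $\mathbb Z[\zeta_p]$-lattices show that $S_f$ is $p$-elementary, i.e.\ $p\cdot S_f^{*}\subseteq S_f$, and one controls its discriminant form precisely via the gluing $S_f\oplus S_f^{\perp}\hookrightarrow L$ together with $A_L\cong\mathbb Z/3\mathbb Z$. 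Feeding this into Nikulin's existence and uniqueness criteria for a primitive embedding $S_f\hookrightarrow L$ whose orthogonal complement still carries a positive $3$-space cuts the possibilities down to a short, explicit list of candidate lattices for each prime $p$. For the larger primes the rank bound is already so rigid that essentially no admissible $S_f$ survives; the delicate range is $p\in\{2,3\}$, where many $p$-elementary negative definite lattices of the allowed rank do embed primitively into $L$.

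To eliminate the surviving candidates — the technical heart of the argument — I would pass to the action of $f$ on the full rational cohomology $H^{\bullet}(X,\mathbb Q)$. By Looijenga--Lunts--Verbitsky theory $f$ commutes with the LLV Lie algebra, and, being symplectic, it fixes the hyperbolic plane added in the extended lattice $\widetilde L = L\oplus U$ of rank $26$; using the explicit LLV decomposition of $H^{\bullet}(X,\mathbb Q)$ for type $\OG10$, the whole representation of $f$ on each $H^{k}(X,\mathbb Q)$ is then determined by $p$ and the single integer $\operatorname{rk}S_f/(p-1)$. In particular the topological Lefschetz number $\sum_k(-1)^{k}\operatorname{tr}\bigl(f^{*}\mid H^{k}(X,\mathbb Q)\bigr)=e(X^{f})$ and the holomorphic Lefschetz number $\sum_i(-1)^{i}\operatorname{tr}\bigl(f^{*}\mid H^{i}(X,\mathcal O_X)\bigr)=n+1=6$ become computable invariants of $(p,S_f)$. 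Since $\chi(\mathcal O_X)=6\neq0$, the fixed locus $X^{f}$ is nonempty, and it is a disjoint union of holomorphic symplectic submanifolds, hence of submanifolds with non-negative Euler number; comparing the computed value of $e(X^{f})$ with these constraints (and, where $e(X^{f})=0$, refining the analysis through the local contributions to the holomorphic Lefschetz formula) rules out every candidate with $S_f\neq0$. It remains to treat $S_f=0$: then $f$ acts trivially on $H^2(X,\mathbb Z)$, and one concludes by the known triviality of the kernel of $\Aut(X)\to O(H^2(X,\mathbb Z))$ for type $\OG10$ (equivalently, by one more application of the holomorphic Lefschetz formula). The main obstacle I anticipate is exactly the low-prime case $p\in\{2,3\}$: there the lattice sieve leaves genuinely many candidates, and closing each of them requires squeezing enough numerical information out of the LLV-controlled cohomology representation and the geometry of the fixed locus.
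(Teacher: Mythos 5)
Your setup (Torelli, coinvariant lattice \(S_f=\bL_g\), negative definiteness, reduction to prime order) matches the paper's, but the elimination step has a genuine gap: you never translate the condition ``\(f\) preserves the Kähler cone'' into its numerical form. For type \(\OG10\) the Kähler cone is cut out by the wall divisors \(\cW_{\OG10}\), which include \emph{all} classes of square \(-2\) and \(-4\); consequently the coinvariant lattice of a symplectic \emph{automorphism} must have minimal norm at least \(6\). This is the only constraint that distinguishes automorphisms from birational transformations, and it cannot be dispensed with, because symplectic birational involutions of \(\OG10\) manifolds genuinely exist (e.g.\ with coinvariant lattice \(\bE_8(2)\), realized geometrically by Beckmann--Oberdieck and classified by Marquand). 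Such an involution acts on the full cohomology compatibly with the LLV algebra exactly as a hypothetical biregular automorphism would, so its topological and holomorphic Lefschetz numbers --- which are purely cohomological quantities --- cannot produce a contradiction; your proposed LLV/Lefschetz sieve is therefore structurally unable to kill precisely the candidates you flag as the ``delicate range'' \(p\in\{2,3\}\). Moreover, without the minimal-norm constraint the list of \(p\)-elementary negative definite lattices embedding primitively into \(\bL\) for \(p=2,3\) is very large, and you leave the entire case analysis as a program rather than a proof.

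The paper closes exactly this gap by exploiting the minimal-norm bound in two ways, split according to the action of \(g\) on the discriminant group \(\bL^\sharp\cong\IZ/3\IZ\). If the action is trivial, \(\bL_g\) embeds primitively into \(\bLambda_{1,25}\) and the Gaberdiel--Hohenegger--Volpato/Huybrechts argument identifies it with a coinvariant lattice of the Leech lattice; by Höhn--Mason all such lattices have minimal norm \(4\), contradicting the required minimal norm \(6\). If the action is non-trivial, one reduces to an involution and plays off an upper bound \(|{\det(\bL_g)}|\le 3\cdot 2^{\min(n,24-n)}\) coming from the gluing against a lower bound \(|{\det(\bL_g)}|\ge 3^n/(2^n b_n^2)\) coming from Rogers' sphere-packing bounds and \(\mu\ge 6\); only \(n=1\), \(\bL_g\cong[-6]\), survives, and that vector has divisibility \(3\), hence again lies in \(\cW_{\OG10}\). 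If you want to salvage your approach, the first step is to import the wall-divisor description of the Kähler cone and the resulting bound \(\mu\ge 6\) on \(\bL_g(-1)\); at that point the LLV machinery becomes unnecessary.
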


Type \(\OG10\) was the only remaining known deformation type of irreducible holomorphic symplectic manifolds that lacked a systematic treatment of symplectic automorphisms in the literature. \autoref{thm:main.aut.symp.OG10} fills this gap. For references to the works on the other types, see the historical note in \cite[§1.1]{Grossi.Onorati.Veniani:sp.bir.transf.OG6}.

A similar theorem holds for symplectic automorphisms of the other sporadic deformation type \(\OG6\) found by O'Grady in dimension~\(6\) (see \cite[Theorem~1.1]{Grossi.Onorati.Veniani:sp.bir.transf.OG6}). 
On the other hand, \autoref{thm:main.aut.symp.OG10} does not hold for birational transformations.
Indeed, it is known for instance that manifolds of type \(\OG10\) can admit symplectic birational involutions (see \autoref{rmk:birat.transf} for a simple lattice theoretical argument, \cite[§7.3]{Beckmann.Oberdieck:equiv.cat.sympl.surf} for a geometrical example, \cite{Marquand:sympl.birat.} for a complete classification, and \cite{Felisetti.Giovenzana.Grossi} for induced symplectic birational involutions).

By a result by Mongardi and Wandel \cite{Mongardi.Wandel:OG.trivial.action}, any automorphism of a manifold~\(X\) of type~\(\OG10\) acting trivially on the second integral cohomology lattice \(\HH^2(X,\IZ)\) is the identity. Thus, the proof of \autoref{thm:main.aut.symp.OG10} consists in showing that there is no symplectic automorphism~\(f\) of finite order acting non-trivially on \(\HH^2(X,\IZ)\). The proof is divided into two parts, according to the action of \(f\) on the discriminant group of \(\HH^2(X,\IZ)\). In the case of trivial action, we use a trick involving the Leech lattice already employed by Gaberdiel, Hohenegger and Volpato \cite{Gaberdiel.Hohenegger.Volpato:symmetries.K3.sigma.models}, and the classification of invariant sublattices of the Leech lattice by Höhn and Mason~\cite{Hoehn.Mason:290.fixed.Leech}. In the case of non-trivial action, we take advantage of some bounds by Rogers~\cite{Rogers:packing.equal.spheres} and Leech~\cite{Leech:notes.sphere.packings} related to the sphere packing problem.

The paper is divided into two sections. In \autoref{sec:preliminaries}, we introduce the notation and recall some known results.
In \autoref{sec:proof}, we prove \autoref{thm:main.aut.symp.OG10}. 

\subsection*{Acknowledgments} We thank the anonymous referees for their useful suggestions.

\section{Preliminaries} \label{sec:preliminaries}

In this section, we recall basic definitions and some known results which we will use in the sequel.

\subsection{Lattices}
In this paper, we use exactly the same conventions and notation for lattices, i.e., finitely generated free \(\IZ\)-modules \(L \cong \IZ^r\) with a non-degenerate symmetric bilinear pairing \((v,w) \mapsto v \cdot w\), as in our previous paper \cite[§2]{Grossi.Onorati.Veniani:sp.bir.transf.OG6}. 

A lattice \(L\) is called \emph{even} if \(v^2 \in 2\IZ\) for all \(v \in L\), and \emph{unimodular} if \(|{\det(L)}| = 1\).
The \emph{dual} of \(L\) is defined as \(L^\vee \coloneqq \{x \in L\otimes \IQ \mid x \cdot y \in \IZ \text{ for all } y \in L\}\). The divisibility of an element~\(v \in L\) is defined by \((v,L) \coloneqq \gcd\{ v\cdot w \mid w \in L\}\).

The \emph{discriminant group} of a lattice \(L\) is the abelian group \(L^\sharp \coloneqq L^\vee/L\), which has order \(|L^\sharp| = |{\det(L)}|\) and inherits from \(L\) a finite quadratic form when \(L\) is even.
The orthogonal groups of \(L\) and \(L^\sharp\) are denoted by \(\OO(L)\) and \(\OO(L^\sharp)\), respectively. The natural homomorphism \(\OO(L) \rightarrow \OO(L^\sharp)\) is denoted \(g \mapsto g^\sharp\). 

Given a subgroup \(G \subset \OO(L)\), we denote the \emph{invariant sublattice} by \(L^G \coloneqq \{v \in L \mid g(v) = v \text{ for all } g\in G\}\), and the \emph{coinvariant sublattice} by \(L_G \coloneqq (L^G)^\perp\). If the subgroup \(G\) is generated by a single element \(g\), then we write \(L^g \coloneqq L^G\) and \(L_g \coloneqq L_G\). 

A \emph{root} of an even negative definite lattice~\(L\) is an element~\(v \in L\) such that \(v^2 = -2\). 
The \emph{length} of a finite group~\(A\) is the minimal number of generators of \(A\) and it is denoted by \(\ell(A)\); similarly, for a prime number \(p\), the \emph{\(p\)-length} of \(A\) is the minimal number of generators of a Sylow \(p\)-subgroup of \(A\) and it is denoted by \(\ell_p(A)\).

We denote by \(\bU\) and \(\bE_8\) the unique even unimodular lattices of signature \((1,1)\) and~\((0,8)\), respectively. We denote by \(\bA_2\) the unique even lattice of signature \((0,2)\) and determinant~\(3\).

\subsection{Torelli theorem}
We define
\[
    \bL \coloneqq 3\bU \oplus 2\bE_8 \oplus \bA_2.
\]

By a result of Rapagnetta~\cite{Rapagnetta:Beauville.form.known.ihs}, the second integral cohomology group \(\HH^2(X,\IZ)\) of any manifold \(X\) of type~\(\OG10\), together with the Beauville--Bogomolov--Fujiki form, is isomorphic to the lattice~\(\bL\).
We define the following subsets of \(\bL\):
\begin{align*}
    \cW^\pex_{\OG10} & \coloneqq \Set{ v \in \bL}{v^2 = -2} \cup \Set{v \in \bL}{v^2 = -6, (v, \bL) = 3}, \\
    \cW_{\OG10} & \coloneqq \cW^\pex_{\OG10} \cup \Set{ v \in \bL}{v^2 = -4} \cup \Set{v \in \bL}{v^2 = -24, (v, \bL) = 3}.
\end{align*}

Geometrically we have the following interpretation of the two sets above. Let \(X\) be a manifold of type~\(\OG10\) and recall that the positive cone of $X$ is the cone $\mathcal{C}_X$ of classes $x\in\HH^{1,1}(X,\IR)$ such that $(x,x)>0$ and $(x,\omega)>0$ for one (and hence all) K\"ahler class $\omega$. Once we fix a marking \(\eta\colon\HH^2(X,\IZ)\to\bL\), the K\"ahler cone (resp.\ the birational K\"ahler cone) is the chamber in
\[ \mathcal{C}_X\setminus_{D\in\HH^{1,1}(X,\IZ)\cap\eta^{-1}(\cW_{\OG10})}D^\perp \quad\left(\mbox{resp.}\quad\mathcal{C}_X\setminus_{D\in\HH^{1,1}(X,\IZ)\cap\eta^{-1}(\cW^\pex_{\OG10})}D^\perp\right) \]
containing a K\"ahler class.

We can now state a consequence of the Torelli theorem which will be used to prove \autoref{thm:main.aut.symp.OG10}. We say that an isometry \(g \in \OO(\bL)\) is \emph{induced} by an automorphism of a manifold of type~\(\OG10\) if there exists a manifold $X$ of type~\(\OG10\), a marking \(\eta\colon\HH^2(X,\IZ)\to\bL\), and an automorphism \(f\in\Aut(X)\) such that
\[ g=\eta\circ (f^{-1})^*\circ\eta^{-1}, \]
where \(f^*\in\OO(\HH^2(X,\IZ))\) is the pullback of $f$.


\begin{theorem} \label{thm:automorphisms}
A finite subgroup \(G \subset \OO(\bL)\) is induced by a group of symplectic automorphisms of a manifold of type~\(\OG10\) if and only if \(\bL_G\) is negative definite and 
\begin{equation} \label{eq:thm:automorphisms}
    \bL_G \cap \cW_{\OG10} = \emptyset. 
\end{equation}
\end{theorem}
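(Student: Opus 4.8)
The plan is to derive \autoref{thm:automorphisms} from the general Torelli-type machinery for irreducible holomorphic symplectic manifolds, combined with the description of the Kähler cone recalled above in terms of the walls $\cW_{\OG10}$ and $\cW^\pex_{\OG10}$. The statement is the $\OG10$-analogue of the characterisation of induced symplectic automorphism groups that is by now standard for the other deformation types (K3$^{[n]}$, Kum$_n$, $\OG6$), so the proof should be a matter of assembling known ingredients rather than proving anything fundamentally new.

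\medskip

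First I would prove necessity. Suppose $G$ is induced by a group of symplectic automorphisms of some manifold $X$ of type $\OG10$, via a marking $\eta$. Since the automorphisms are symplectic, the period $\eta(\sigma_X)$ of $X$ is fixed by $G$, hence lies in $(\bL_G)^\perp \otimes \IC = \bL^G \otimes \IC$; together with the fact that $\eta(\sigma_X)$ spans a positive-definite real plane in the transcendental part, this forces $\bL_G$ to be contained in the algebraic lattice and to be negative definite (the only positive directions of $\bL$ come from the period plane and the Kähler class, which are $G$-invariant). For \eqref{eq:thm:automorphisms}, note that an element $v \in \bL_G \cap \cW_{\OG10}$ would be a $G$-invariant-orthogonal algebraic class lying on a wall of the Kähler chamber; but $G$ acts on the Kähler cone (automorphisms preserve Kähler classes), and a common Kähler class $\omega$ for $X$ satisfies $(\omega, v) \neq 0$ for every $v$ defining a wall, so $v^\perp$ cannot be hit — contradiction, unless one first passes to a birational model. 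The standard fix is to use that the $G$-action, being by automorphisms, preserves the Kähler cone itself (not merely the birational Kähler cone), so the argument goes through with $\cW_{\OG10}$ rather than $\cW^\pex_{\OG10}$; this is exactly why the statement features the larger wall set.

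\medskip

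For sufficiency, the plan is the usual two-step construction. Assume $\bL_G$ is negative definite and $\bL_G \cap \cW_{\OG10} = \emptyset$. Choose a period point: pick a positive-definite plane $P \subset \bL^G \otimes \IR$ in general position (so that $P^\perp$ contains no elements of $\cW_{\OG10}$ beyond those already excluded, which one can arrange since $\bL_G$ is negative definite and $\bL^G$ has signature $(3, \ell(\bL^G)-3)$), and let $X$ be a manifold of type $\OG10$ with that period, which exists by surjectivity of the period map. By construction $G$ preserves the period, hence by the Torelli theorem each $g \in G$ is realised, up to sign and up to composing with a monodromy reflection in a wall, by a birational self-map of $X$; the hypothesis $\bL_G \cap \cW_{\OG10} = \emptyset$ guarantees there is a $G$-invariant Kähler class (a generic element of $\bL^G$ in the positive cone avoids all wall orthogonals inside $\bL^G$, because any such wall would give an element of $\cW_{\OG10}$ in $\bL_G$ or force $\bL^G$ to be degenerate), which upgrades the birational maps to biregular automorphisms and pins down the sign. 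The resulting $f_g \in \Aut(X)$ are symplectic because $g$ fixes the period plane, and $g \mapsto f_g$ is a group homomorphism by the Torelli uniqueness statement, so $G$ is induced.

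\medskip

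The main obstacle I anticipate is the careful handling of the $\cW_{\OG10}$ versus $\cW^\pex_{\OG10}$ dichotomy — i.e.\ ensuring that the chamber structure on the positive cone restricts well to $\bL^G$ and that a $G$-invariant Kähler class genuinely exists — and verifying that the period point can be chosen so that the algebraic lattice of $X$ is exactly $\bL_G$ (or at least contains it with $\bL_G$ still $\cW_{\OG10}$-free as a sublattice of $\NS(X)$). This requires knowing that the hyperplane arrangement $\{v^\perp : v \in \cW_{\OG10}\}$ is locally finite in the positive cone, which holds because the relevant norms ($-2$, $-4$, $-6$, $-24$) are bounded, so only finitely many walls pass near any given point; granting that, the genericity arguments are routine. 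I would cite the $\OG10$ Torelli theorem and Kähler cone description (due to Mongardi–Rapagnetta, following Bayer–Macrì and Amerik–Verbitsky) for the inputs, in the same spirit as the $\OG6$ treatment in \cite{Grossi.Onorati.Veniani:sp.bir.transf.OG6}.
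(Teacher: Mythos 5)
Your proposal follows essentially the same route as the paper's proof: necessity via a $G$-invariant K\"ahler class combined with the wall description of the K\"ahler cone in terms of $\cW_{\OG10}$, and sufficiency via surjectivity of the period map together with the Hodge-theoretic Torelli theorem. The paper streamlines the sufficiency step by choosing the period so that $\bL^{1,1}\cap\bL=\bL_G$ exactly, whence \eqref{eq:thm:automorphisms} forces the K\"ahler cone to equal the whole positive cone (so none of the genericity or birational-model issues you anticipate actually arise), and it makes explicit the one input you leave implicit: that $G\subset\OO^+(\bL)$, which is the full monodromy group in type $\OG10$ by Onorati's theorem, as follows from $\bL_G$ being negative definite.
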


\begin{proof}
    The proof is analogous to \cite[Theorem~2.16]{Grossi.Onorati.Veniani:sp.bir.transf.OG6}. 
    Let us sketch it here for the reader's convenience.

    First of all, let \(X\) be a manifold of type~\(\OG10\), and let us fix a marking, i.e., an isometry \(\HH^2(X,\IZ) \cong \bL\). If \(G\subset\OO(\bL)\) is induced by a group of symplectic automorphisms, then \(\bL_G\) is negative definite by \cite[Lemma~2.12]{Grossi.Onorati.Veniani:sp.bir.transf.OG6}. Moreover, any element of~\(G\) must send the Kähler cone into itself. The Kähler cone is described lattice-theoretically in \cite[Theorem~5.5]{Mongardi.Onorati:birational.geometry.OG10}: it coincides with one of the chambers of the positive cone determined by the orthogonal hyperplanes to the elements in \(\cW_{\OG10}\). Condition~\eqref{eq:thm:automorphisms} follows then from \cite[Lemma~2.14]{Grossi.Onorati.Veniani:sp.bir.transf.OG6}.

    For the reverse implication, it is enough to show the existence of one manifold of type~\(\OG10\) on which \(G\) is induced by symplectic automorphisms. 

    Since \(\bL_G\) is negative definite, there exists by \cite[Lemma~2.13]{Grossi.Onorati.Veniani:sp.bir.transf.OG6} a Hodge structure \(\bL \otimes \IC = \bL^{2,0} \oplus \bL^{1,1} \oplus \bL^{0,2}\) with the following properties: the Hodge structure is preserved by~\(G\), \(\bL^{1,1}\cap\bL=\bL_G\), and \(G\) acts as the identity on \((\bL^{2,0} \oplus \bL^{0,2}) \cap \bL\). 
    By the surjectivity of the period map (see \cite[Theorem~8.1]{Huybrechts:basic.results}), there exists a manifold \(X\) of type~\(\OG10\) with a marking \(\HH^2(X,\IZ) \cong \bL\) such that the Hodge structure on \(\HH^2(X,\IC) \cong \HH^2(X,\IZ) \otimes \IC\) corresponds through the marking to the Hodge structure on \(\bL \otimes \IC\). 
    
    Now, by the Hodge-theoretic Torelli theorem \cite[Theorem~1.3]{Markman:survey}, the group \(G\) is induced by a group of automorphisms of \(X\) if and only if it acts via monodromy operators, it preserves the Hodge structure, and it preserves the Kähler cone.

    By \cite[Theorem~5.4]{Onorati:OG10.monodromy}, the monodromy group of a manifold of type~\(\OG10\) is the whole group \(\OO^+(\bL)\). Since \(\bL_G\) is negative definite, we have by \cite[Lemma~2.3]{Grossi.Onorati.Veniani:sp.bir.transf.OG6} that \(G\subset\OO^+(\bL)\), so that the first condition of the Hodge-theoretic Torelli theorem is satisfied. Moreover, \(G\) preserves the Hodge structure of \(X\) by construction, thus satisfying the second condition of the Hodge-theoretic Torelli theorem.

    Again by \cite[Theorem~5.5]{Mongardi.Onorati:birational.geometry.OG10}, the Kähler cone is cut out inside the positive cone by the hyperplanes which are orthogonal to the vectors in \(\cW_{\OG10}\). Since \(\bL^{1,1}\cap\cW_{\OG10}=\bL_G\cap\cW_{\OG10}=\emptyset\), it follows that the Kähler cone coincides with the positive cone, which is preserved by \(G\) because \(G\subset\OO^+(\bL)\). Therefore, also the third condition of the Hodge-theoretic Torelli theorem is satisfied. Since \(G\) acts as the identity on \((\bL^{2,0} \oplus \bL^{0,2}) \cap \bL\) by construction, the automorphisms inducing \(G\) are symplectic.
\end{proof}

For the sake of completeness, we also state the corresponding theorem for symplectic birational transformations. 
The proof is analogous to the proof of \autoref{thm:automorphisms}, with the difference that we consider the set \(\cW^\pex_{\OG10}\) instead of \(\cW_{\OG10}\) and the birational Kähler cone instead of the Kähler cone.

\begin{theorem} \label{thm:bir.transformations}
A finite subgroup \(G \subset \OO(\bL)\) is induced by a group of symplectic birational transformations of a manifold of type~\(\OG10\) if and only if \(\bL_G\) is negative definite and 
\[
    \bL_G \cap \cW^\pex_{\OG10} = \emptyset.
\]
\end{theorem}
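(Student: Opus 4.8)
The plan is to mirror the proof of \autoref{thm:automorphisms} line by line, replacing at each point the Kähler cone by the birational Kähler cone and the wall set \(\cW_{\OG10}\) by the smaller set \(\cW^\pex_{\OG10}\). First I would establish the forward implication: if \(G\subset\OO(\bL)\) is induced by a group of symplectic birational transformations, then fixing a manifold \(X\) of type~\(\OG10\) and a marking \(\HH^2(X,\IZ)\cong\bL\), the coinvariant lattice \(\bL_G\) is negative definite by \cite[Lemma~2.12]{Grossi.Onorati.Veniani:sp.bir.transf.OG6} (this step is insensitive to whether we work with automorphisms or birational transformations, since it only uses that \(G\) is finite and symplectic, hence acts trivially on the transcendental part and with negative definite coinvariant on \(\HH^{1,1}\)). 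Then every element of \(G\) must preserve the birational Kähler cone, which by \cite[Theorem~5.5]{Mongardi.Onorati:birational.geometry.OG10} is the chamber of the positive cone cut out by the hyperplanes orthogonal to the vectors of \(\cW^\pex_{\OG10}\); applying \cite[Lemma~2.14]{Grossi.Onorati.Veniani:sp.bir.transf.OG6} exactly as before gives \(\bL_G\cap\cW^\pex_{\OG10}=\emptyset\).

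For the reverse implication I would again construct a single suitable manifold. Given \(G\) with \(\bL_G\) negative definite, \cite[Lemma~2.13]{Grossi.Onorati.Veniani:sp.bir.transf.OG6} produces a \(G\)-invariant Hodge structure on \(\bL\otimes\IC\) with \(\bL^{1,1}\cap\bL=\bL_G\) and \(G\) acting trivially on \((\bL^{2,0}\oplus\bL^{0,2})\cap\bL\); the surjectivity of the period map \cite[Theorem~8.1]{Huybrechts:basic.results} realises this on a marked manifold \(X\) of type~\(\OG10\). Here one invokes the Hodge-theoretic Torelli theorem in its birational form \cite[Theorem~1.3]{Markman:survey}: a subgroup of the monodromy group preserving the Hodge structure is induced by a group of \emph{birational} transformations, with no condition on the Kähler cone (one only needs that it sends \emph{some} Kähler class to a Kähler class of a birational model, equivalently that it preserves the positive cone). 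Since the monodromy group is all of \(\OO^+(\bL)\) by \cite[Theorem~5.4]{Onorati:OG10.monodromy}, and \(G\subset\OO^+(\bL)\) by \cite[Lemma~2.3]{Grossi.Onorati.Veniani:sp.bir.transf.OG6} (negative definiteness of \(\bL_G\)), and \(G\) preserves the Hodge structure by construction, the group \(G\) is induced by birational transformations of \(X\). These are symplectic because \(G\) acts as the identity on \((\bL^{2,0}\oplus\bL^{0,2})\cap\bL\).

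The only genuinely new input is that the hypothesis \(\bL_G\cap\cW^\pex_{\OG10}=\emptyset\) is exactly what is needed to guarantee that the chosen period point lies in the closure of the birational Kähler cone — equivalently, that no vector of \(\cW^\pex_{\OG10}\) becomes of type \((1,1)\). Indeed \(\bL^{1,1}\cap\cW^\pex_{\OG10}=\bL_G\cap\cW^\pex_{\OG10}=\emptyset\), so after \cite[Theorem~5.5]{Mongardi.Onorati:birational.geometry.OG10} the birational Kähler cone of \(X\) coincides with the full positive cone \(\mathcal C_X\), which is preserved by \(G\) since \(G\subset\OO^+(\bL)\); this verifies the remaining hypothesis of the birational Torelli theorem. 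I expect the main (and essentially only) obstacle to be bookkeeping: making sure that the lemmas from \cite{Grossi.Onorati.Veniani:sp.bir.transf.OG6} cited in the \(\OG6\) setting are stated with enough generality to apply verbatim to \(\OG10\) with the wall set \(\cW^\pex_{\OG10}\), and that the distinction between ``preserves the Kähler cone'' and ``preserves the birational Kähler cone'' in Markman's Torelli theorem is invoked correctly. Since all of this is parallel to the already-proven \autoref{thm:automorphisms} and to \cite[Theorem~2.17]{Grossi.Onorati.Veniani:sp.bir.transf.OG6}, no new difficulty arises.
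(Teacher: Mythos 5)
Your proposal follows exactly the route the paper takes: the paper's own proof is the single remark that one repeats the argument of \autoref{thm:automorphisms} verbatim, substituting \(\cW^\pex_{\OG10}\) for \(\cW_{\OG10}\) and the birational K\"ahler cone for the K\"ahler cone, which is precisely what you carry out. Your expanded version, including the use of the birational form of Markman's Torelli theorem and the observation that the wall condition forces the birational K\"ahler cone to equal the positive cone, is a correct and faithful elaboration of that same argument.
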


\begin{remark} \label{rmk:birat.transf}
Manifolds of type \(\OG10\) with non-trivial symplectic birational transformations do exist. Consider for instance the involution \(g \in \OO(\bL)\) which consists in exchanging the two copies of \(\bE_8\) and is the identity elsewhere. Then, \(\bL_g\) is isomorphic to~\(\bE_8(2)\). In particular, any \(v \in \bL_g\) satisfies \(4 \mid v^2\), so the group generated by \(g\) satisfies the conditions of \autoref{thm:bir.transformations}.
\end{remark}

\subsection{Leech lattice}

The \emph{Leech lattice}, which we denote by \(\Leech\), is the unique even, unimodular, negative definite lattice of rank~\(24\) without roots. 

Let \(\bLambda_{1,25}\) be the unique even unimodular lattice of signature \((1,25)\). The following proposition was originally proved by Gaberdiel, Hohenegger and Volpato~\cite[§B.2]{Gaberdiel.Hohenegger.Volpato:symmetries.K3.sigma.models}. The same argument was reproduced by Huybrechts~\cite[§2.2, p.~398]{Huybrechts:derived.cat.K3.sp.aut.Conway.group}. We sketch it here for the sake of completeness.

\begin{proposition}[\cite{Gaberdiel.Hohenegger.Volpato:symmetries.K3.sigma.models},\cite{Huybrechts:derived.cat.K3.sp.aut.Conway.group}] \label{prop:Huybrechts.2.2}
Let \(L\) be a lattice, and let \(G \subset \OO(L)\) be a subgroup of isometries which acts trivially on the discriminant group \(L^\sharp\), such that \(L_G\) is negative definite and does not contain roots. If there exists a primitive embedding \(L_G \hookrightarrow \bLambda_{1,25}\), then \(G\) is isomorphic to a subgroup \(G \subset \OO(\Leech)\) such that \(L_G \cong (\Leech)_G\).
\end{proposition}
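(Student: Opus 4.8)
The plan is to follow the strategy of Gaberdiel--Hohenegger--Volpato and Huybrechts, which is tailored to unimodular $L$, and to prepend one discriminant-form lemma so that it applies verbatim in general. Concretely I would proceed in three steps: (1) upgrade the hypothesis on $L^\sharp$ to the statement that $G$ acts trivially on $(L_G)^\sharp$; (2) use the primitive embedding to transplant $G$ onto $\bLambda_{1,25}$, extending it by the identity on the orthogonal complement of $L_G$; (3) invoke Conway's structure theory for $\bLambda_{1,25}$ to descend the resulting action to the Leech lattice.

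For Step~(1): since $G$ fixes $L^G$ pointwise it acts trivially on $(L^G)^\sharp$; writing $L$ as the overlattice of $L^G \oplus L_G$ corresponding to an isotropic subgroup $H \subseteq (L^G)^\sharp \oplus (L_G)^\sharp$ --- the graph of an anti-isometry between subgroups of the two discriminant groups --- I would note that $G$ preserves $H$ and, being trivial on the first summand, is forced to be trivial on the image $H_2$ of $H$ in $(L_G)^\sharp$. Then, using $L^\sharp \cong H^\perp/H$ together with the non-degeneracy of the discriminant form of $L^G$, one checks that the projection of $H^\perp$ onto $(L_G)^\sharp$ is surjective; hence triviality of $G$ on $L^\sharp$ propagates to triviality on all of $(L_G)^\sharp$. (When $L$ is unimodular this is automatic, since then $(L_G)^\sharp \cong (L^G)^\sharp$, which is why this step is absent from the unimodular treatments.)

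For Step~(2): I would fix a primitive embedding $L_G \hookrightarrow \bLambda_{1,25}$ with orthogonal complement $M$, of signature $(1, 25 - \mathrm{rk}\, L_G)$ and in particular indefinite in the only relevant case $\mathrm{rk}\, L_G \le 24$, and define $\widetilde{G} \subset \OO(\bLambda_{1,25})$ to act as $G$ on $L_G$ and as the identity on $M$. Since $\bLambda_{1,25}$ is unimodular, it is the overlattice of $L_G \oplus M$ glued along the graph of an anti-isometry $(L_G)^\sharp \xrightarrow{\sim} M^\sharp$; this gluing is respected because $G$ is trivial on $(L_G)^\sharp$ by Step~(1) and $\widetilde{G}$ is trivial on $M^\sharp$ by construction, so $\widetilde{G}$ is well defined and $\widetilde{G} \cong G$. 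A $\widetilde{G}$-invariant vector of $\bLambda_{1,25}$ has trivial component along $L_G$ (a coinvariant lattice carries no nonzero invariants), hence lies in $M$; thus $(\bLambda_{1,25})^{\widetilde{G}} = M$ and, by primitivity of $L_G$, $(\bLambda_{1,25})_{\widetilde{G}} = L_G$. In particular $\widetilde{G}$ fixes a vector of positive square, so $\widetilde{G} \subset \OO^+(\bLambda_{1,25})$.

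For Step~(3) --- the hard part --- I would use that, for every primitive isotropic $w \in \bLambda_{1,25}$, the lattice $w^\perp/w$ is an even unimodular negative-definite lattice of rank $24$, hence a Niemeier lattice, and is isomorphic to $\Leech$ exactly when no root of $\bLambda_{1,25}$ is orthogonal to $w$. The crucial input, which is Conway's analysis of $\bLambda_{1,25}$ and its reflection group and is precisely what is exploited in \cite{Gaberdiel.Hohenegger.Volpato:symmetries.K3.sigma.models} and \cite{Huybrechts:derived.cat.K3.sp.aut.Conway.group}, is that the absence of roots in $L_G = (\bLambda_{1,25})_{\widetilde G}$ forces $M = (\bLambda_{1,25})^{\widetilde G}$ to contain such a vector $w$ --- a Weyl vector of a suitable Weyl chamber. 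Granting this, $\widetilde{G}$ fixes $w$ (it fixes $M$ pointwise), hence acts on $w^\perp/w \cong \Leech$; the image of $L_G$ there is still root-free and $\widetilde G$ acts faithfully on it, so $G \cong \widetilde G$ embeds into $\OO(\Leech)$, and from $(\bLambda_{1,25})^{\widetilde G} = M$ and primitivity of $L_G$ one identifies $(\Leech)_{\widetilde G}$ with $L_G$. The main obstacle is exactly this reduction to Conway's picture --- the description of the Weyl chambers of $\OO^+(\bLambda_{1,25})$, the correspondence between their isotropic cusps and the Niemeier lattices, and the identification of the chambers whose Weyl vector is of Leech type; by contrast, Steps~(1) and~(2) are routine manipulations of discriminant forms.
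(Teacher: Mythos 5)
Your proposal is correct and follows essentially the same route as the paper: propagate the trivial discriminant action from \(L^\sharp\) to \((L_G)^\sharp\), extend \(G\) to \(\bLambda_{1,25}\) by the identity on \(L_G^\perp\), and invoke Conway--Borcherds' description of the reflection group of \(\bLambda_{1,25}\) (the paper conjugates by the Weyl group so that \(G\) stabilizes the fundamental chamber \(\mathcal{C}_0\), whose stabilizer fixes the Weyl vector \(w\), giving \(w\in L_G^\perp\) and the descent to \(w^\perp/\IZ w\cong\Leech\)). Your Step (1) is in fact spelled out in more detail than the paper's one-line assertion, and the key input you "grant" in Step (3) is exactly the cited result the paper relies on.
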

\begin{proof}
    First of all, since \(G\) acts as the identity on the discriminant group of \(L\), it also acts as the identity on the discriminant group of \(L_G\). In particular, thanks to the primitive embedding \(L_G \hookrightarrow \bLambda_{1,25}\), the action of \(G\) on \(L_G\) can be extended to \(\bLambda_{1,25}\) as the identity on \(L_G^\perp\subset\bLambda_{1,25}\). Note that  \(\bLambda_{1,25}^G\cong L_G^\perp\) is a non-degenerate lattice of signature \((1,25-\rank(L_G))\).

    Let us now fix an isometry \(\bLambda_{1,25}\cong\bLambda_{24}\oplus \bU\), where \(\bU\) is the unimodular hyperbolic lattice of rank \(2\). We also fix an isotropic generator \(w\in \bU\) and we see it as an isotropic element of \(\bLambda_{1,25}\). A \emph{Leech root} of \(\bLambda_{1,25}\) is any root \(\delta\) such that \((\delta,w)=1\). The Weyl group \(W \subset \OO(\bLambda_{1,25})\) is the group generated by reflections associated with Leech roots (see \cite[Chapter~27]{Conway.Sloane:sphere.packings.lattices.groups}). We denote by \(\mathcal{C}_0\) the subset of \(\bLambda_{1,25}\otimes\IR\) of positive classes \(x\) such that \((x,\delta)>0\) for any Leech root \(\delta\).

    Since \(L_G\) contains no roots by hypothesis, it follows that there is no root \(\delta\in\bLambda_{1,25}\) such that \(\bLambda_{1,25}^G\subset\delta^\perp\). In particular, this applies to Leech roots. Therefore, up to changing the primitive embedding \(L_G \hookrightarrow \bLambda_{1,25}\) by an element of \(W\), we can assume that \(\mathcal{C}_0\) is fixed by (the \(\IR\)-linear extension of) \(G\), i.e., \(G\subset\OO(\bLambda_{1,25},\mathcal{C}_0)\). By \cite{Borcherds.PhD}, the group \(\OO(\bLambda_{1,25},\mathcal{C}_0)\) is known to fix the isotropic vector \(w \in \bLambda_{1,25}\), hence \(w \in \bLambda_{1,25}^{G} \cong L_G^{\perp}\). 
    It follows that there exists a primitive embedding \(L_G \hookrightarrow \bLambda_{24}\) given by the composition
    \[
        L_G\hookrightarrow w^\perp\twoheadrightarrow w^\perp/\IZ w\cong\bLambda_{24}.
    \]

    Finally, since again \(G\) acts as the identity on the discriminant group of \(L_G\), the action of \(G\) on \(L_G\) can be extended to an action of \(G\) on \(\bLambda_{24}\) such that \(L_G \cong (\Leech)_G\) as claimed.
\end{proof}

\subsection{Sphere packings} 
Let \(L\) be a positive definite lattice of rank~\(n\). Its \emph{minimal norm}~\(\mu\) and \emph{packing radius}~\(\rho\) are defined as follows:
\begin{equation} \label{eq:rho.mu}
    \mu \coloneqq \min\{ v^2 \mid v \in L \setminus \{0\}\} \quad \text{and} \quad \rho \coloneqq \frac 12 \sqrt \mu.
\end{equation}

The \emph{center density} of \(L\) is defined as
\begin{equation} \label{eq:center.density}
    \delta \coloneqq  \frac{\rho^n}{\sqrt{\det L}}.
\end{equation}

For \(n \leq 24\), there exist upper bounds \(b_n\) on the center density \(\delta\) found by Rogers~\cite{Rogers:packing.equal.spheres}, explicitly computed by Leech~\cite{Leech:notes.sphere.packings}, and reproduced by Conway and Sloane in \cite[Table~1.2, p.~15]{Conway.Sloane:sphere.packings.lattices.groups}.
For the reader's convenience, we copied these bounds in \autoref{tab:Rogers.bound}.

\begin{table}[t]
    \centering
    \caption{Rogers' upper bounds on center density \(\delta\), as computed by Leech.}
    \label{tab:Rogers.bound}
    \begin{tabular}{c|llllllll}
        \toprule
        \(n\) & 1 & 2 & 3 & 4 & 5 & 6 & 7 & 8 \\
        \(b_n\) & 0.5 & 0.28868 & 0.1847 & 0.13127 & 0.09987 & 0.08112 & 0.06981 & 0.06326 \\
        \midrule 
        \(n\) & 9 & 10 & 11 & 12 & 13 & 14 & 15 & 16 \\
        \(b_n\) & 0.06007 & 0.05953 &  0.06136 &  0.06559 & 0.07253 & 0.08278 & 0.09735 & 0.11774 \\
        \midrule
        \(n\) & 17 & 18 & 19 & 20 & 21 & 22 & 23 & 24 \\
        \(b_n\) & 0.14624 & 0.18629 & 0.24308 & 0.32454 & 0.44289 & 0.61722 & 0.87767 & 1.27241 \\
        \bottomrule
    \end{tabular}
\end{table}

\section{Proof of \autoref{thm:main.aut.symp.OG10}} \label{sec:proof}

We fix an irreducible holomorphic symplectic manifold \(X\) of type~\(\OG10\) and a symplectic automorphism \(f \in \Aut(X)\) of finite order. We claim that \(f\) acts trivially on the second integral cohomology group \(\HH^2(X,\IZ)\), and is thus the identity, by a result of Mongardi and Wandel~\cite[Theorem~3.1]{Mongardi.Wandel:OG.trivial.action}.

Choose any marking for \(X\), that is, an isometry \(\eta \colon \HH^2(X,\IZ) \rightarrow \bL\), and let 
\(g \coloneqq \eta \circ (f^{-1})^* \circ \eta^{-1} \in \OO(\bL)\)
be the isometry of \(\bL\) induced by \(f\).
Suppose that \(g\) is not the identity. We seek a contradiction on the coinvariant lattice \(\bL_g\). For the rest of this section, we will use the fact that  both \(\bL^g\) and \(\bL_g\) are primitive sublattices of \(\bL\).

The discriminant group \(\bL^\sharp\) has order \(|{\det(\bL)}| = 3\). The only non-trivial automorphism of \(\bL^\sharp\), which we denote by \(-\id\), is the one exchanging the two non-trivial elements of \(\bL^\sharp\). Hence, we have \(\OO(\bL^\sharp) = \{\id, -\id\}\).
In the following, we consider the image \(g^\sharp \in \OO(\bL^\sharp)\) and we treat the two cases \(g^\sharp = \id\) and \(g^\sharp = -\id\) in \autoref{sec:trivial.action} and \autoref{sec:non-trivial.action}, respectively, seeking a contradiction.

\subsection{Trivial action on discriminant group} \label{sec:trivial.action}
Suppose that \(g\) acts trivially on the discriminant group \(\bL^\sharp\), i.e., \(g^\sharp = \id\).

First of all, we claim that \(\ell(\bL_g^\sharp) \leq \ell((\bL^g)^\sharp) + \ell(\bL^\sharp)\). In fact, applying \cite[Proposition~1.15.1]{Nikulin:int.sym.bilinear.forms} to the primitive embedding of \(\bL^g\) in \(\bL\), we see that \(\bL_g^\sharp\) is isometric to an abelian group of the form \(\Gamma^\perp/\Gamma\), where \(\Gamma\subset(\bL^g)^\sharp(-1)\oplus\bL^\sharp\) is an isotropic subgroup. Therefore, we have
\[              \ell(\bL_g^\sharp)=\ell(\Gamma^\perp/\Gamma)\leq\ell(\Gamma^\perp)\leq\ell((\bL^g)^\sharp) + \ell(\bL^\sharp), 
\]
as claimed. 
Now, since $\bL^\sharp=\IZ/3\IZ$, and \(\ell(L^\sharp) \leq \rank(L)\) for any lattice \(L\), we get the following chain of inequalities: 
\[
    \ell(\bL_g^\sharp) \leq \ell((\bL^g)^\sharp) + \ell(\bL^\sharp) \leq \rank(\bL^g) + 1 = (24 - \rank(\bL_g)) + 1 < \rank(\bLambda_{1,25}) - \rank(\bL_g).
\]
It follows from \cite[Corollary~1.12.3]{Nikulin:int.sym.bilinear.forms} that there exists a primitive embedding \(\bL_g \hookrightarrow \bLambda_{1,25}\).

Note, moreover, that \(\bL_g\) is negative definite and does not contain roots by \autoref{thm:automorphisms}.
Hence, all hypotheses of \autoref{prop:Huybrechts.2.2} are satisfied, 
and we conclude that there exists an element \(g \in \OO(\Leech)\) of order~\(p\) such that \(\bL_g \cong (\Leech)_g\).

Höhn and Mason observe that all coinvariant lattices of the (positive definite) Leech lattice have minimal norm \(4\) (see \cite[p.\ 633]{Hoehn.Mason:290.fixed.Leech}). Thus, \((\Leech)_g\) contains an element of square~\(-4\), and so does \(\bL_g\). Condition~\eqref{eq:thm:automorphisms} of \autoref{thm:automorphisms} is therefore not satisfied, and we have a contradiction.

\subsection{Non-trivial action on discriminant group} \label{sec:non-trivial.action}
Suppose now that \(g\) acts non-trivially on the discriminant group \(\bL^\sharp\), i.e., \(g^\sharp = -\id\). Since \(-\id\) has order \(2\), the order of \(g\) is necessarily even. Up to taking powers, we can suppose that the order of \(g\) is \(2^r\), for some integer \(r\geq 1\). If \(r>1\), then \(g^2\) acts trivially on the discriminant group, and, therefore, it is trivial  by \autoref{sec:trivial.action}. Hence, from now on we can further assume that the order of \(g\) is exactly \(2\). In particular, \(g\) acts as the identity on the invariant sublattice \(\bL^g\), and as multiplication by \(-1\) on the coinvariant sublattice \(\bL_g\).

By work of Nikulin \cite[Proposition~1.5.1]{Nikulin:int.sym.bilinear.forms}, the primitive embedding \(\bL^g \hookrightarrow \bL\) is given by a subgroup \(H\) of \((\bL^g)^\sharp\), a subgroup \(H'\) of \(\bL_g^\sharp\), and an isometry \(\gamma \colon H \rightarrow H'(-1)\), which is called `gluing isometry' in \cite[§2.2]{Grossi.Onorati.Veniani:sp.bir.transf.OG6}. 
By Nikulin's construction, one identifies \(\Xi \coloneqq \bL/(\bL^g\oplus\bL_g)\) with an isotropic subgroup \(\Xi \subset (\bL^g)^\sharp \oplus \bL_g^\sharp\). By definition, the groups \(H\) and \(H'\) are the image of \(\Xi\) under the projections to \(  (\bL^g)^\sharp\) and \(\bL_g^\sharp\), respectively. Therefore, the isometry \(\gamma\) is equivariant with respect to the action of \(g^\sharp\).
Hence, for every \(\xi \in H\) we have
\[
    \xi = g^\sharp(\xi) = \gamma^{-1}(\gamma(g^\sharp(\xi))) = \gamma^{-1}(g^\sharp(\gamma(\xi))) = \gamma^{-1}(-(\gamma(\xi)) = -\xi,
\]
so all elements of \(H\) have order~\(2\), that is, \(H\) is a \(2\)-elementary abelian group, say of length~\(\ell\). 
In particular, \(|H| = 2^\ell\).
Since \(H\) and \(H'\) are isomorphic, also \(|H'| = 2^\ell\).


By \cite[eq. (5)]{Grossi.Onorati.Veniani:sp.bir.transf.OG6}, it holds \(|H|^2 \cdot |{\det(\bL)}| = |{\det(\bL^g) \cdot \det(\bL_g)}|\),
which can be written as
\[
    3 = |{\det(\bL)}| = [(\bL^g)^\sharp:H] \cdot [\bL_g^\sharp:H'].
\]
In particular, we have \([\bL_g^\sharp:H'] \leq 3\) and, therefore,
\[
    |{\det(\bL_g)}| = |\bL_g^\sharp| = [\bL_g^\sharp:H'] \cdot |H'| \leq 3 \cdot 2^\ell.
\]
Put \(n \coloneqq \rank(\bL_g)\). Note that \(\ell \leq \ell_2((\bL^g)^\sharp) \leq \rank \bL^g = 24-n\) and \(\ell \leq \ell_2(\bL_g^\sharp) \leq \rank \bL_g =n\).
Thus, we obtain the following upper bound:
\begin{equation} \label{eq:non-triv.action.upper.bound}
    |{\det(\bL_g)}| \leq 3 \cdot 2^{\min(n,24-n)}.
\end{equation}

We now look for a lower bound. By Condition~\eqref{eq:thm:automorphisms} of \autoref{thm:automorphisms}, \(\bL_g\) does not contain elements of square \(-2\) or \(-4\). Since \(\bL_g(-1)\) is an even (positive definite) lattice, its minimal norm \(\mu\) satisfies
\begin{equation} \label{eq:mu>=6}
    \mu \geq 6.
\end{equation}

We let \(b_n\) be Rogers' bound on the center density in dimension \(n\), as in \autoref{tab:Rogers.bound}. Then, denoting \(\delta\) and \(\rho\) respectively the center density and the packing radius of \(\bL_g(-1)\), it follows from \eqref{eq:rho.mu}, \eqref{eq:center.density} and \eqref{eq:mu>=6} that
\begin{equation} \label{eq:non-triv.action.lower.bound}
    |{\det(\bL_g)}| = \frac{\rho^{2n}}{\delta^2} \geq \frac{\rho^{2n}}{b_n^2} = \frac{\mu^n}{2^{2n}b_n^2} \geq \frac{3^n}{2^nb_n^2}.
\end{equation}

Since \(\bL_g\) is negative definite by \autoref{thm:automorphisms}, and \(\bL\) has signature \((3,21)\), we have \(n \leq 21\).
By plugging in the values for \(b_n\) given by \autoref{tab:Rogers.bound}, we see that  
\[
    3 \cdot 2^{\min(n,24-n)} < \frac{3^n}{2^nb_n^2} \quad \text{for all \(n \in \{2,\ldots,21\}\)},
\]
so the two bounds \eqref{eq:non-triv.action.upper.bound} and \eqref{eq:non-triv.action.lower.bound} contradict each other. 
Hence, \(\bL_g\) can only exist in rank \(n = 1\), in which case the two bounds coincide and, necessarily, \(\bL_g \cong [-6]\).
Let \(v\) be a generator of \(\bL_g\) and \(w\) be any other element in \(\bL\). Since \(g\) is an involution, \(w + g(w) \in \bL^g\) and \(w-g(w) \in \bL_g\). Therefore, given that \((v,\bL_g) = 6\), we have
\[
    (v,w) = \frac 12 (v, w+g(w)) + \frac 12 (v,w-g(w)) = \frac 12 (v,w-g(w)) \in 3\IZ,
\]
i.e., \(3 \mid (v,\bL)\). Since \(v\) is a primitive vector of \(\bL\), we have \((v,\bL) \mid \det\bL\), hence \((v,\bL) = 3\). In particular, Condition \eqref{eq:thm:automorphisms} of \autoref{thm:automorphisms} is not satisfied because \(v \in \bL_g \cap \cW_{\OG10}\). Thus,  case \(n = 1\) cannot occur either, and this finishes the proof of \autoref{thm:main.aut.symp.OG10}. \qed

\bibliographystyle{amsplain}
\bibliography{references}

\end{document}